\newcommand\numberthis{\addtocounter{equation}{1}\tag{\theequation}}
\numberwithin{equation}{section}
\theoremstyle{plain}
\newtheorem{thm}{Theorem}[section]
\newtheorem*{thm*}{Theorem}
\newtheorem*{cor*}{Corollary}
\theoremstyle{definition}
\theoremstyle{remark}
\newtheorem*{rem}{Remark}
\renewcommand{\c}{\boldsymbol{c}}
\newcommand{\s}{\boldsymbol{s}}
\newcommand{\N}{\mathbb{N}}
\newcommand{\R}{\mathbb{R}}
\newcommand{\C}{\mathbb{C}}
\newcommand{\BigO}[1]{\ensuremath{\operatorname{O}\left(#1\right)}}
\def\pmod#1{\allowbreak\mkern10mu({\operator@font mod}\,\,#1)} 
 \keywords{ Moments of sums of Ramanujan sums, Br\`eteche Tauberian Theorem, arithmetical functions of several variables.}
\subjclass[2020]{11A05, 11L03, 11N37}
\title{On the moments of averages of Ramanujan sums}
\author[Shivani Goel]{Shivani Goel}
\author[M. Ram Murty]{M. Ram Murty}
\address{Indraprashta Institute of Information Technology (IIIT), New Delhi, 110020, India}
\email{shivanig@iiitd.ac.in}
\address{Department of Mathematics and Statistics, Queen's University, Kingston, Canada, ON K7L 3N6.}
\email{murty@queensu.ca}
\begin{document}

\begin{abstract}
Chan and Kumchev studied averages of the first and second moments of Ramanujan sums. In this article, we extend this investigation by estimating the higher moments of averages of Ramanujan sums using the Br\`eteche Tauberian theorem. We also give a result for the moments of averages of Cohen-Ramanujan sums. 
\end{abstract}

\maketitle

\section{\bf Introduction and main results}
For positive integers $q$ and $n$,  Ramanujan \cite{ramanujan1918certain} studied in 1918, the function $c_q(n)$ defined as follows:
\[c_q(n):=\sum_{\substack{1\le j\le n\\(j,q)=1}}e\left(\frac{ nj}{q}\right)=\sum_{\substack{d|n\\d|q}}d\mu\left(\frac{q}{d}\right), \qquad e(t):=e^{2\pi it} .\numberthis\label{rsum}\]
The equality of the two sums is a simple consequence of the M\"obius inversion formula.
Perhaps inspired by the theory of Fourier expansions of continuous functions, Ramanujan first encountered these sums while exploring trigonometric series representations of normalized arithmetic functions of the form $\sum_{q}a_qc_q(n)$, now known as Ramanujan expansions. The sums \eqref{rsum} have since been called Ramanujan sums. Subsequently, in 1932, Carmichael \cite{Carmi} established that these sums also possess an orthogonality property.
   Ramanujan and Carmichael's work set the stage for a general theory of Ramanujan sums and Ramanujan expansions.
   Ramanujan sums exhibit deep connections in number theory and arithmetic, including their role in proving Vinogradov's theorem on the ternary Goldbach problem \cite[Chapter 8]{nathanson1996additive}, Waring-type formulas \cite{konvalina1996generalization}, the distribution of rational numbers in short intervals \cite{jutila2007distribution}, equipartition modulo odd integers \cite{balandraud2007application}, the large sieve inequality \cite{ramare2007eigenvalues}, and various other branches of mathematics. For more recent developments in the direction of Ramanujan expansions, we refer to \cite{HD, AD, LR, schwarz1988ramanujan, SS, AW, EW}. 
   
  Understanding these sums and their distribution is an essential and interesting topic.  Alkan \cite{alkan1, alkan2}  studied the weighted average of Ramanujan sums. The question on the average order over both variables $n$ and $q$ of $c_q(n)$ was first considered by Chan and Kumchev \cite{Chan} motivated by applications to problems on Diophantine approximations of reals by sums of rational numbers. In \cite{Chan}, using both elementary and analytic techniques, they  
found asymptotic formulas for
\begin{equation}\label{moments}
S_{k}(x,y):=\sum_{n\le y}\left(\sum_{q\le x}c_q(n)\right)^k
\end{equation}
for $k=1,2$. Robles and Roy adapted their methodology to compute the averages of generalized Ramanujan sums introduced by Cohen. It is worth noting, however, that in their study presented in \cite{MR3600410}, Robles and Roy claimed a result for higher moments, which has since been determined to be incorrect. To be precise, their Proposition 1.1 implies 
for $k>1$, (and $\beta =1$ in their notation) that
\begin{equation}\label{roy}
S_k(x,y) = {3yx^2 \over \pi^2 } + O(yx\log x + x^{2k}\log ^k x),
\end{equation}
for $y> x^{2k} \log^{k+1} x$.  
This is correct for $k=2$, but for $k=4$, the theorem contradicts itself as can be seen
by a simple application of the Cauchy-Schwarz inequality:
$$ S_2(x,y) \leq y^{1/2} S_4(x,y)^{1/2}.$$
This raises the question of what exactly is the behaviour of (\ref{moments}) for $k\geq 3$. This problem prompts an exploration of the theory of the arithmetical functions of several variables,
a study initiated by Vaidyanathaswamy \cite{vaidya} in 1931.  
This theory is still in evolution and several recent papers
\cite{nogues,  breteche,  sargos} 
highlight the importance of developing such a theory.
In this paper, we derive the asymptotic behaviour of the moments of Ramanujan sums (\ref{moments}) for $k\ge 3$. This result is an important step in developing the theory of the arithmetical functions of several variables. More precisely, we prove:
\begin{thm}\label{maintheorem}
    For $k\ge 3$ and $y> x^k$, as $x\to \infty$, we have 
    \begin{align*}
     S_{k}(x,y)=yx^kQ(\log x)+\BigO{yx^{k-\theta}},
 \end{align*}
 where $Q\in \R[X] $ is a polynomial of exact degree $2^k-2k-1$ and $0\le \theta\le 1.$
\end{thm}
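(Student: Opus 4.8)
\emph{Proof plan.} The plan is to convert $S_k(x,y)$ into a multiple sum of a nice arithmetical function of $k$ variables, identify the polar divisor of its multiple Dirichlet series, feed this into the Br\`eteche Tauberian theorem, and finally estimate the remaining error. First I would interchange the order of summation in \eqref{moments} and insert the divisor form of $c_q$ from \eqref{rsum}:
\[
S_k(x,y)=\sum_{q_1,\dots,q_k\le x}\ \sum_{n\le y}\prod_{i=1}^k c_{q_i}(n)
=\sum_{q_1,\dots,q_k\le x}\ \sum_{d_i\mid q_i}\Big(\prod_{i=1}^k d_i\mu(q_i/d_i)\Big)\Big\lfloor\frac{y}{\lcm(d_1,\dots,d_k)}\Big\rfloor .
\]
Writing $\lfloor t\rfloor=t-\{t\}$ splits this as $S_k(x,y)=y\sum_{q_1,\dots,q_k\le x}W(q_1,\dots,q_k)+E(x,y)$, where
\[
W(q_1,\dots,q_k):=\sum_{d_i\mid q_i}\frac{\prod_{i=1}^k d_i\,\mu(q_i/d_i)}{\lcm(d_1,\dots,d_k)}
\]
and $E$ collects the fractional parts. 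The function $W$ is multiplicative as a function of $k$ variables (lcm, divisibility and $\mu$ are all "local"), and a prime-power computation shows $W$ vanishes whenever some prime divides exactly one of the $q_i$; thus $W$ is supported on tuples whose entries share common prime factors, which is what keeps the sum of manageable size.

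Next I would study $Z(s_1,\dots,s_k):=\sum_{q_1,\dots,q_k\ge1}W(q_1,\dots,q_k)\prod_i q_i^{-s_i}=\prod_p Z_p(\mathbf s)$. Evaluating the local factor on exponent vectors with entries in $\{0,1\}$, indexed by the subset $S\subseteq\{1,\dots,k\}$ of coordinates equal to $1$, one finds that the corresponding value of $W$ is $0$ for $|S|\le1$ and has leading term $p^{|S|-1}$ for $|S|\ge2$ (higher prime powers being negligible). Comparing with an Euler product then yields the factorisation
\[
Z(s_1,\dots,s_k)=\Bigg(\prod_{\substack{S\subseteq\{1,\dots,k\}\\|S|\ge2}}\zeta\Big(\sum_{i\in S}s_i-|S|+1\Big)\Bigg)H(s_1,\dots,s_k),
\]
with $H=\prod_pH_p$ holomorphic and bounded on a polydisc $\mathrm{Re}(s_i)>1-\eta$ and with $H(1,\dots,1)\ne0$ (one checks $H_p(1,\dots,1)=1+\BigO{p^{-2}}$, so the Euler product converges to a nonzero value). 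Hence the polar divisor of $Z$ is precisely the set of $\binom k2+\cdots+\binom kk=2^k-k-1$ hyperplanes $\sum_{i\in S}s_i=|S|$, and \emph{all} of them pass through the single point $(1,\dots,1)$.

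I would then apply the Br\`eteche Tauberian theorem to $\sum_{q_1,\dots,q_k\le x}W(q_1,\dots,q_k)$. At $(1,\dots,1)$ the linear forms attached to these $2^k-k-1$ hyperplanes span a space of rank $k$ (for $k\ge3$ the forms $s_i+s_j$ already span $\R^k$), while each variable contributes a factor $x^{s_i}$, i.e.\ $x^1$ at the polar point. The theorem therefore outputs
\[
\sum_{q_1,\dots,q_k\le x}W(q_1,\dots,q_k)=x^{k}Q(\log x)+\BigO{x^{k-\theta_0}}
\]
for some $\theta_0>0$, with $Q\in\R[X]$ of degree $(2^k-k-1)-k=2^k-2k-1$ and leading coefficient $H(1,\dots,1)$ times the positive geometric constant the theorem provides, so the degree is exact. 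For the error $E(x,y)$, I would substitute $q_i=d_ie_i$ with $e_i$ squarefree so that the $e_i$-sums collapse via $\sum_{m\le t,\,\mu^2(m)=1}\mu(m)=M(t)$ (the Mertens function) to $E(x,y)=\sum_{d_i\le x}\big(\prod_i d_iM(x/d_i)\big)\{y/\lcm(d_1,\dots,d_k)\}$; equivalently, apply Perron's formula directly to the $n$-sum, whose Dirichlet series is $\zeta(s)B(s)$ with $B(s)=\sum_{\mathbf d}\big(\prod_i d_iM(x/d_i)\big)\lcm(\mathbf d)^{-s}$ a Dirichlet polynomial supported on integers $\le x^{k}$, and shift the contour past $s=0$, using standard bounds for $\zeta$ and for $M$. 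The hypothesis $y>x^{k}$ is exactly what makes the shifted integral small and delivers $E(x,y)=\BigO{yx^{k-\theta}}$ with $0\le\theta\le1$. Combining the three ingredients gives $S_k(x,y)=yx^{k}Q(\log x)+\BigO{yx^{k-\theta}}$ with $\deg Q=2^k-2k-1$.

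The main obstacle is the local analysis: one must compute $Z_p$ precisely enough to confirm that \emph{exactly} the subsets of size at least $2$ (neither more nor fewer) produce polar hyperplanes, and then verify the holomorphy and growth hypotheses on the cofactor $H$ required by Br\`eteche's theorem. Once that is done, the value $2^k-2k-1$ for the degree is forced by the combinatorial count $2^k-k-1$ together with the rank-$k$ computation, and positivity of the leading coefficient follows from $H(1,\dots,1)\ne0$ and the explicit geometric constant. A secondary technical nuisance is the error term: the fractional-part sum $E$ carries a secondary contribution of size $\asymp x^{2k}$ coming from the average value $\tfrac12$ of $\{y/\lcm(\mathbf d)\}$, and controlling this against the main term $\asymp yx^{k}(\log x)^{2^k-2k-1}$ is precisely what dictates the lower bound imposed on $y$.
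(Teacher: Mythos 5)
Your plan coincides with the paper's proof in essentially every structural step: the same interchange of summation and insertion of $\lfloor y/[d_1,\dots,d_k]\rfloor$, the same multiplicative function $W(q_1,\dots,q_k)=\sum_{d_i\mid q_i}\mu(q_1/d_1)\cdots\mu(q_k/d_k)\,d_1\cdots d_k/[d_1,\dots,d_k]$, the same factorization of its multiple Dirichlet series as $\prod_{|I|\ge 2}\zeta(s_I-|I|+1)$ times a cofactor holomorphic and nonvanishing near $(1,\dots,1)$, and the same bookkeeping in Br\`eteche's theorems ($q=2^k-k-1$ linear forms, $w=0$, rank $n=k$, the all-ones vector lying in the open cone) yielding exact degree $2^k-2k-1$.

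There is, however, one genuine gap: Br\`eteche's Tauberian theorem as invoked here applies only to \emph{non-negative} arithmetical functions, and you never verify that $W\ge 0$. This is not automatic, since $W$ is a M\"obius-twisted convolution and could a priori take negative values at prime powers; the paper devotes an entire section to proving $W(p^{v_1},\dots,p^{v_k})\ge 0$ by a case analysis on how often the maximal exponent is attained (the value is $0$ unless the maximum occurs at least twice, and otherwise reduces to an explicit alternating sum that one checks is non-negative). Your observation that $W$ vanishes when a prime divides exactly one of the $q_i$ is only the first case of that analysis, and without the full verification the Tauberian machinery cannot be applied. A secondary remark: your proposed treatment of the fractional-part error via Mertens sums and Perron's formula is far more than is needed; the trivial bound $|\{y/[d_1,\dots,d_k]\}|\le 1$ already gives $E(x,y)=O(x^{2k})$, which is what the paper uses and which is absorbed into the error term under the stated hypothesis on $y$.
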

Thus, (\ref{roy}) is egregiously false in two ways: first in the dominant power of $x$ and second, in the powers of the logarithm that must be added to the main term.

 Our main tool is the Br\`eteche Tauberian theorem for
non-negative arithmetical functions of several variables, which we review in the next section.

Analogous questions of moments of these sums over number fields have been studied by numerous mathematicians \cite{CG,chaubey2024moments, MR3332952,nowak12, nowak13,ma2021average,zhai2021average} for the cases $k=1, 2$.  Our methods should extend to handle the number field cases also for $k\geq 3$ and we relegate this to future work.  
\section{ \bf The Br\`eteche Tauberian theorem}
The study of (\ref{moments}) inevitably leads one into the theory of arithmetical functions of several variables.  In the one variable case, the classical Tauberian theorems provide us with asymptotic behaviours of the summatory function of the non-negative arithmetical function of a single variable by relating it with the analytic properties of the associated Dirichlet series.
In the multivariable case, a similar theorem exists but it does not seem to be well-known.  The extension of Cauchy's residue theorem for functions of several variables seems to have been first addressed by Leray 
\cite{leray} in 1959 using the language of sheaf theory.  Later, in the 1980's, Cassou-Nogues \cite{nogues} and Sargos \cite{sargos}  derived more precise results that could be applied to counting problems involving arithmetical functions of several variables.  We should also mention the work of Lichtin 
\cite{lichtin} in this regard.  In the early part of the 21st century, Br\`eteche \cite{breteche} derived a 
multi-variable version of the Tauberian theorem using classical methods of analytic number theory and it is this version that we apply to our situation.
His theorems in this context are as follows.
\begin{thm}\label{tauberian 1}
 Let $f:\mathbb{N}^k\to \R$   be a non-negative function and $F$ the associated Dirichlet series of $f$ defined by 
 \[F(\s)=F(s_1,\cdots,s_k)=\sum_{n_1,\cdots,n_k=1}^{\infty}\frac{f(n_1,\cdots,n_k)}{n_1^{s_1}\cdots n_k^{s_k}}.\]
 Denote by $\mathcal{LR}_k^{+}(\C)$ the set of non-negative $\C$ linear forms from $\C^k$ to $\C$ on $\R_{+}^k$. Moreover, assume that there exists $(c_1,\cdots,c_k)\in \R_{+}^k$ such that:
 \begin{enumerate}
     \item For $\s \in \C^k$, $F(s_1,\cdots,s_k)$ is absolutely convergent for $\Re(s_i)>c_i$  for all $1\le i\le k$.
     \item There exist a finite family $\mathcal{L}=(l^{(i)})_{1\le i\le q}$ of non-zero elements of $\mathcal{LR}_k^{+}(\C)$, a finite family $(h^{(i)})_{1\le i\le q'}$ of elements of $\mathcal{LR}_k^{+}(\C)$ and $\delta_1,\delta_2,\delta_3>0$ such that the function $H$ defined by 
     \[H(\s)=F(\s+\c)\prod_{i=1}^ql^{(i)}(\s)\] has a holomorphic continuation to the domain
     \begin{align*}
        & D(\delta_1,\delta_3)\\&=\left\{\s\in \C^k: \Re{l^{(i)}(\s)}>-\delta_1 \text{ for all}\ i=1,\cdots,q \ \text{and}\  \Re{h^{(i)}(\s)}>-\delta_3 \ \text{for all}\ i=1,\cdots,q' \right\},
     \end{align*}
     and verifies the estimate: for $\epsilon,\epsilon'>0$ we have uniformly in $\s\in  D(\delta_1-\epsilon,\delta_3-\epsilon')$
     \begin{align*}
         H(\s)\ll \prod_{i=1}^q\left(|\Im{l^{(i)}(\s)}|+1\right)^{1-\delta_2\min\left(0,\Re{l^{(i)}(\s)}\right)}(1+(\Im{s_1}+\cdots+\Im{s_k})^{\epsilon}).
     \end{align*}
     Set $J=J(\C)=\{j\in \{1,\cdots, k\}: c_j=0\}$. Denote $w$ to be the cardinality of $J$ and by $j_1<\cdots<j_w$ its elements in increasing order. Define the $w$ linear forms $l^{(q+i)}$  $(1\le i\le w)$ by $l^{(q+i)}(\s)=e^{*}_{j_i}(\s)=s_{j_i}$. 
 \end{enumerate}
 Then, for any $\boldsymbol{\beta}=(\beta_1,\cdots,\beta_k)\in (0,\infty)^k$, there exist a polynomial $Q_{\beta}\in \R[X]$ of degree at most $q+w-\text{Rank}\{l^{(1)},\cdots,l^{(q)}\}$ and $\theta>0$ such that as $x\to \infty$
 \begin{align*}
     \sum_{n_1\le x^{\beta_1}}\cdots \sum_{n_k\le x^{\beta_k}}f(n_1,\cdots,n_k)=x^{<\c,\boldsymbol{\beta}>}Q_{\boldsymbol{\beta}}(\log x)+\BigO{x^{<\c,\boldsymbol{\beta}>-\theta}}.
      \end{align*}
 Here, $<\cdot, \cdot >$ denotes the usual dot product in $\R^k$. 
\end{thm}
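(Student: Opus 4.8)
The plan is to prove this by the method of de la Br\`eteche \cite{breteche}. Writing $x_i=x^{\beta_i}$ and $N(\mathbf{x}):=\sum_{n_1\le x_1}\cdots\sum_{n_k\le x_k}f(n_1,\dots,n_k)$, the idea is to pass from $N(\mathbf{x})$ to a smoothed sum whose Mellin transform decays rapidly on vertical lines, to represent that smoothed sum as a $k$-fold contour integral of $F$, to shift the contours past the polar hyperplanes so as to read the main term off the residues, and finally to undo the smoothing by a Tauberian argument exploiting $f\ge 0$. Two reductions at the outset: after relabelling coordinates one may assume $J=\{1,\dots,w\}$; and since the Perron kernel $\prod_i 1/s_i$ occurring below contributes a simple pole along $s_i=0$, which for $i\in J$ sits on the critical hyperplane $\Re s_i=c_i=0$, it is convenient to fold the adjoined forms $l^{(q+i)}(\s)=s_{j_i}$ into the family from the start, writing $Q=q+w$ and $(l^{(i)})_{1\le i\le Q}$. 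Hypothesis (1) together with $f\ge 0$ already gives the crude bound $N(\mathbf{x})\ll_\epsilon x^{<\c,\boldsymbol{\beta}>+\epsilon}$, which legitimizes all the analytic steps below; positivity is used again decisively at the de-smoothing stage and, via Landau's theorem, guarantees that $x^{<\c,\boldsymbol{\beta}>}$ (and not a smaller power) is the genuine order of magnitude, since it places a singularity of $F$ on each coordinate boundary $\Re s_j=c_j$.

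For a large integer $\kappa$, introduce the $\kappa$-fold iterated Ces\`aro weight in each variable, giving the exact identity
\[
N_\kappa(\mathbf{x}):=\frac{1}{(\kappa!)^k}\sum_{\mathbf{n}}f(\mathbf{n})\prod_{i=1}^k(x_i-n_i)_+^{\kappa}=\frac{1}{(2\pi i)^k}\int_{(\c+\eta_0\mathbf{1})}F(\s)\prod_{i=1}^k\frac{\kappa!\,\Gamma(s_i)}{\Gamma(s_i+\kappa+1)}\,x_i^{s_i+\kappa}\,d\s,
\]
the integral being over the polycylinder $\Re s_i=c_i+\eta_0$ with $\eta_0>0$ small, which converges absolutely there by hypothesis (1) and the decay $\Gamma(s_i)/\Gamma(s_i+\kappa+1)\ll(1+|\Im s_i|)^{-\kappa-1}$. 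One fixes $\kappa$ large — $\kappa\ge Q+2$ suffices — so that this same gamma-decay also dominates the polynomial growth of $H$ from hypothesis (2) on all contours met below, keeping every shifted integral absolutely convergent.

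The crux is the contour shift. Choose $\eta>0$ so small, and then $\eta_0$ much smaller still, that the polycylinder $\{\Re s_i=c_i-\eta\}$ lies, after the translation $\s\mapsto\s-\c$, inside the holomorphy domain $D(\delta_1-\epsilon,\delta_3-\epsilon')$, on which $F(\s)=H(\s-\c)\big/\prod_{i=1}^{q}l^{(i)}(\s-\c)$ with $H$ holomorphic and polynomially bounded. Move each contour from $\Re s_i=c_i+\eta_0$ down to $\Re s_i=c_i-\eta$, one variable at a time; each crossing of a polar hyperplane $l^{(i)}(\s-\c)=0$ with $i\le q$, or of the pole of $\Gamma(s_i)$ at $s_i=0$ for $i\in J$ (the hyperplane $l^{(q+i)}(\s)=0$), contributes via the one-variable residue theorem an integral of lower multiplicity over the corresponding section. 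Iterating, $N_\kappa(\mathbf{x})$ becomes a finite sum of residue integrals indexed by the flats of the arrangement $\{l^{(i)}(\s-\c)=0\}_{1\le i\le Q}$, plus a remainder over the fully shifted polycylinder. The residue attached to the corner $\s=\c$, through which all $Q$ hyperplanes pass and at which $H$ is holomorphic, produces after evaluation of the factors $x_i^{s_i+\kappa}$ the term $x^{<\c,\boldsymbol{\beta}>}\big(\prod_i x_i^{\kappa}\big)P(\log x)$, where $P$ arises as an iterated residue of $H(\s-\c)/\prod_{i=1}^{Q}l^{(i)}(\s-\c)$ and hence has degree at most $Q-\operatorname{Rank}\{l^{(1)},\dots,l^{(Q)}\}\le q+w-\operatorname{Rank}\{l^{(1)},\dots,l^{(q)}\}$ (the last step because $Q=q+w$ and adjoining coordinate forms can only raise the rank). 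Every other residue integral, and the remainder, carries a genuine power saving $x_i^{c_i-\eta}$ in at least one index, so the growth bound of hypothesis (2) estimates their total by $\big(\prod_i x_i^{\kappa}\big)\BigO{x^{<\c,\boldsymbol{\beta}>-\theta_0}}$ for some $\theta_0>0$. This proves the asymptotic with $N_\kappa$ in place of $N$, the factor $\prod_i x_i^{\kappa}$ being an artifact of the weight.

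Finally one removes the weights using $f\ge 0$: since $N(\mathbf{x})$ is non-decreasing in each $x_i$, finite-difference comparisons express $\sum_{\mathbf{n}\le\mathbf{x}}f(\mathbf{n})$ in terms of the values of $N_\kappa$ at dilated points $(1+O(\delta))\mathbf{x}$, and the smoothed main term (which is $\prod_i x_i^{\kappa}$ times $x^{<\c,\boldsymbol{\beta}>}$ times a fixed polynomial in $\log x_1,\dots,\log x_k$) changes only by a relative $O(\delta)$ under such dilations, giving
\[
N(\mathbf{x})=x^{<\c,\boldsymbol{\beta}>}Q_{\boldsymbol{\beta}}(\log x)+\BigO{\delta\,x^{<\c,\boldsymbol{\beta}>}(\log x)^{\deg Q_{\boldsymbol{\beta}}}+\delta^{-\kappa}x^{<\c,\boldsymbol{\beta}>-\theta_0}}
\]
after cancelling $\prod_i x_i^{\kappa}$ against the weight normalisation (and setting $\log x_i=\beta_i\log x$). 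Optimising $\delta$ yields $N(\mathbf{x})=x^{<\c,\boldsymbol{\beta}>}Q_{\boldsymbol{\beta}}(\log x)+\BigO{x^{<\c,\boldsymbol{\beta}>-\theta}}$ for some $\theta>0$, which we may take $\le 1$, with $Q_{\boldsymbol{\beta}}$ the polynomial assembled in the residue step, of degree at most $q+w-\operatorname{Rank}\{l^{(1)},\dots,l^{(q)}\}$. I expect the genuine difficulty to lie in the contour shift: moving a $k$-fold contour past a hyperplane arrangement that is in general position neither internally nor with respect to the coordinate axes, while keeping every intermediate integral absolutely convergent — this is exactly what forces the uniform growth hypothesis on $H$ and the large value of $\kappa$ — and then carrying out the iterated-residue bookkeeping precisely enough to pin down both the exact power $x^{<\c,\boldsymbol{\beta}>}$ and the stated bound on $\deg Q_{\boldsymbol{\beta}}$. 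The bookkeeping can be streamlined by casting the shift-and-residue step as an induction on the number of variables $k$, with the classical one-variable Tauberian theorem as the base case.
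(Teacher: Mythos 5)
First, a point of orientation: the paper does not prove this statement. Theorem \ref{tauberian 1} is imported verbatim from de la Br\`eteche \cite{breteche} (together with Theorem \ref{tauberian 2}) and is used as a black box, so there is no in-paper proof to compare yours against. Judged against the original argument, your proposal correctly reconstructs its architecture: Riesz--Ces\`aro smoothing in each variable, a $k$-fold Perron--Mellin representation, deformation of the polycylinder of integration past the polar hyperplane arrangement $\{l^{(i)}(\s-\c)=0\}$, iterated residues producing the power of $x$ and the log-polynomial, and removal of the weights by monotonicity (which is where $f\ge 0$ enters). The degree bound via $Q=q+w$ and the remark that adjoining the coordinate forms $s_{j_i}$ can only increase the rank are also right.

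As a proof, however, the proposal has a genuine gap, and you have located it yourself: the contour shift is described but not executed, and that is where the entire content of the theorem lives. Concretely, when you move $s_1$ from $\Re s_1=c_1+\eta_0$ to $c_1-\eta$ with $s_2,\dots,s_k$ held on their contours, the pole of $1/l^{(i)}(\s-\c)$ in the $s_1$-variable sits at a point whose real part depends on the current positions of the other variables; whether it is crossed, and whether the resulting residue term is an absolutely convergent integral over the section $l^{(i)}(\s-\c)=0$, depends on how the growth hypothesis on $H$ interacts with the restrictions of the remaining forms to that section, which may coincide or degenerate there. Showing that every contributing flat other than the corner $\s=\c$ yields a \emph{uniform} saving $x^{\langle\c,\boldsymbol{\beta}\rangle-\theta_0}$ --- rather than a saving in one $x_i$ that could be swamped by the other coordinates when the $\beta_i$ are unequal --- is the substance of de la Br\`eteche's long inductive argument and cannot be waved through; the same goes for verifying that the iterated residue at the corner really has degree at most $Q-\operatorname{Rank}\{l^{(1)},\dots,l^{(Q)}\}$ when the forms are not in general position. (A small slip besides: your Riesz-mean identity double-counts $\kappa!$, which appears both in the prefactor $1/(\kappa!)^k$ and in the kernel $\kappa!\,\Gamma(s_i)/\Gamma(s_i+\kappa+1)$.) Since the paper itself cites this theorem rather than proving it, the honest course is to do the same, or else to carry out the deformation-and-residue induction in full.
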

The next theorem gives a determination of the precise degree of the polynomial $Q_{\boldsymbol{\beta}}$ appearing in the previous theorem.  Denoting by $\R_*^+$ the set of strictly positive real numbers, the notation 
$\text{con*}(\{l^{(1)},\cdots,l^{(q)}\})$ means 
$\R_*^+ l^{(1)} + \cdots + \R_*^+ l^{(q)}$.
\begin{thm}\label{tauberian 2}
    Let  $f:\mathbb{N}^k\to \R$   be a non-negative function satisfying the assumptions of Theorem \ref{tauberian 1}. Let  $\boldsymbol{\beta}=(\beta_1,\cdots,\beta_k)\in (0,\infty)^k$ and set $\mathcal{B}=\sum_{i=1}^k\beta_ie^{*}_i \in \mathcal{LR}_k^{+}(\C)$. Then, if $\text{Rank}\{l^{(1)},\cdots,l^{(q)}\}=n$, $H(\textbf{0})\ne 0$, and $\mathcal{B}\in \text{con*}(\{l^{(1)},\cdots,l^{(q)}\})$, then $\deg(Q_{\boldsymbol{\beta}})=q+w-n$.
\end{thm}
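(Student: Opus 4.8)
The plan is to re-enter the contour-integral argument behind Theorem \ref{tauberian 1} and show that the degree bound it gives is sharp, i.e. that the coefficient of $(\log x)^{q+w-n}$ in $Q_{\boldsymbol\beta}$ is nonzero. Recall how that argument goes: a truncated Perron formula in each variable writes the summatory function as
\[
\frac{1}{(2\pi i)^k}\int\cdots\int F(\s)\prod_{i=1}^k\frac{x^{\beta_is_i}}{s_i}\,d\s
\]
over vertical contours $\Re(s_i)=c_i+\varepsilon$, plus an admissible error, and one then pushes the contours to the left of $\s=\c$. Set $\mathbf w=\s-\c$, write $x=e^u$, let $J=\{j:c_j=0\}$ with elements $j_1<\cdots<j_w$, and put
\[
\mathcal L'=\{l^{(1)},\dots,l^{(q)},\,e^{*}_{j_1},\dots,e^{*}_{j_w}\}\qquad(\text{so }\#\mathcal L'=q+w),\qquad \widetilde H(\mathbf w)=\frac{H(\mathbf w)}{\prod_{i\notin J}(w_i+c_i)}.
\]
Because $\prod_{i\notin J}(w_i+c_i)$ is holomorphic and nonzero near $\mathbf 0$, the function $\widetilde H$ is holomorphic there with $\widetilde H(\mathbf 0)=H(\mathbf 0)/\prod_{i\notin J}c_i\neq0$ by hypothesis, and the polynomial main term $x^{\langle\c,\boldsymbol\beta\rangle}Q_{\boldsymbol\beta}(\log x)$ is the iterated residue at $\mathbf w=\mathbf 0$, along a suitable flag of coordinate hyperplanes, of $\widetilde H(\mathbf w)\,e^{u\langle\boldsymbol\beta,\mathbf w\rangle}\big/\prod_{L\in\mathcal L'}L(\mathbf w)$, whose only polar locus at the origin is the arrangement of the $q+w$ hyperplanes $L=0$.

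\textit{Degree and leading coefficient.} Writing $\widetilde H(\mathbf w)=\widetilde H(\mathbf 0)+R(\mathbf w)$ with $R(\mathbf 0)=0$ and $e^{u\langle\boldsymbol\beta,\mathbf w\rangle}=\sum_{m\ge0}u^m\langle\boldsymbol\beta,\mathbf w\rangle^m/m!$, each monomial $\mathbf w^{\mathbf a}$ fed to the iterated residue either vanishes or cancels $|\mathbf a|$ polar factors, so the top power of $u$ can come only from $\widetilde H(\mathbf 0)$ times the smallest-$m$ term $\langle\boldsymbol\beta,\mathbf w\rangle^m/m!$ that survives. A partial-fraction decomposition of $1/\prod_{L\in\mathcal L'}L$ with respect to a maximal independent subsystem — of size governed by $n=\operatorname{Rank}\{l^{(1)},\dots,l^{(q)}\}$, exactly as in the proof of Theorem \ref{tauberian 1} — shows this smallest $m$ equals $q+w-n$, whence
\[
[u^{q+w-n}]\,Q_{\boldsymbol\beta}=\widetilde H(\mathbf 0)\cdot\kappa,\qquad \kappa=\operatorname*{Res}_{\mathbf w=\mathbf 0}\frac{\langle\boldsymbol\beta,\mathbf w\rangle^{\,q+w-n}}{(q+w-n)!\,\prod_{L\in\mathcal L'}L(\mathbf w)},
\]
the residue taken along the same flag; geometrically $\kappa$ is a volume-type integral of $\langle\boldsymbol\beta,\cdot\rangle$ over the simplex dual to $\mathcal L'$. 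Everything thus reduces to proving $\kappa\neq0$.

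\textit{Nonvanishing of $\kappa$ — the crux.} This is the real content of the theorem, and I would prove it by induction on $k$. Take the residue along one form, say $\{l^{(1)}=0\}$, and restrict the remaining data to that hyperplane. The hypothesis $\mathcal B=\sum_i\beta_ie^{*}_i\in\operatorname{con*}(\{l^{(1)},\dots,l^{(q)}\})$ — membership in the \emph{open} cone, not merely its closure — is precisely what ensures that the restricted problem again has its ``$\mathcal B$'' in the open cone spanned by the restricted forms, so the inductive hypothesis applies; moreover openness forces every residue appearing along the flag to carry the same sign, so no cancellation occurs and $\kappa\neq0$. (On the boundary of $\operatorname{con}(\{l^{(j)}\})$ one of the residues in the chain vanishes and the degree genuinely drops, which is exactly why the cone hypothesis cannot be omitted.) Combined with $\widetilde H(\mathbf 0)\neq0$, this gives $[u^{q+w-n}]Q_{\boldsymbol\beta}\neq0$ and hence $\deg(Q_{\boldsymbol\beta})=q+w-n$. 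The one genuinely delicate step is the induction — verifying that open-cone membership is stable under taking residues and that the residue signs stay constant; the rest is bookkeeping on the hyperplane arrangement $\mathcal L'$.
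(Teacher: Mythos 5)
This theorem is not proved in the paper at all: it is quoted verbatim (as Theorem \ref{tauberian 2}) from La Br\`eteche's work, so there is no ``paper proof'' to compare against; your proposal has to be judged against the argument in the cited source. Your skeleton is the right one --- reduce to showing that the coefficient of $(\log x)^{q+w-n}$ equals $H(\mathbf 0)$ (up to the harmless factor $\prod_{i\notin J}c_i$) times a purely combinatorial--geometric constant $\kappa$ attached to the arrangement $\mathcal L'$ and the direction $\mathcal B$, and then show $\kappa\neq 0$ using the open-cone hypothesis. But the step you yourself label ``the crux'' is asserted rather than proved, and it is precisely the entire content of the theorem. The claim that open-cone membership ``forces every residue appearing along the flag to carry the same sign, so no cancellation occurs'' has no supporting argument; iterated residues over a hyperplane arrangement that is far from general position (and here the forms $s_I$, $|I|\ge 2$, are massively linearly dependent) genuinely can cancel, the iterated residue is in general flag-dependent, and on intermediate contours one picks up contributions from poles away from $\mathbf w=\mathbf 0$ as well. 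Your inductive set-up also never verifies the base case or explains what ``restricting the data to the hyperplane $l^{(1)}=0$'' does to the rank $n$, which is the rank of $\{l^{(1)},\dots,l^{(q)}\}$ alone, not of $\mathcal L'$.

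For the record, La Br\`eteche's own proof avoids any sign bookkeeping: he expresses the leading coefficient as $H(\mathbf 0)$ times an explicit \emph{volume} --- roughly, the $(q+w-n)$-dimensional volume of the polytope $\{\mathbf y\in[0,\infty)^{q+w}:\sum_i y_i\,l^{(q+w+1-i)}=\mathcal B\}$ (suitably normalized) --- and the hypothesis $\mathcal B\in\text{con*}(\{l^{(1)},\dots,l^{(q)}\})$ is exactly the condition that this polytope has nonempty interior, hence positive volume. Positivity is then automatic, with no need to track residue signs. Two smaller points: your statement that a monomial $\mathbf w^{\mathbf a}$ ``cancels $|\mathbf a|$ polar factors'' is not literally true (a monomial need not be divisible by any of the forms $L$); what is true, and what you should say, is a homogeneity count --- $\widetilde H(\mathbf 0)\langle\boldsymbol\beta,\mathbf w\rangle^{m}/\prod_{L\in\mathcal L'}L(\mathbf w)$ is homogeneous of degree $m-(q+w)$ and the higher-order part of $\widetilde H$ strictly raises that degree, which is what controls the power of $u$ extracted. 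And the identification of the exponent $q+w-n$ via ``a partial-fraction decomposition with respect to a maximal independent subsystem'' is the upper-bound argument already contained in Theorem \ref{tauberian 1}; it contributes nothing to the lower bound you are asked to prove. As written, the proposal is a plausible outline with the decisive nonvanishing step missing.
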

With these theorems in place, we begin with a review of
multiplicative arithmetical functions of several variables.
\section{\bf A quick review of arithmetical functions of several variables}
The study of arithmetical functions of several variables seems to begin with the fundamental paper of Vaidyanathaswamy \cite{vaidya} written in 1931.  
There, he defines a multiplicative arithmetical function of several variables $f(n_1,..., n_k)$ as a map 
$f:\N^k \to \C$ satisfying the equation
$$f(m_1n_1, ..., m_k n_k) =  f(m_1, ..., m_k)f(n_1, ..., n_k)$$
whenever $(m_1\cdots m_k, n_1\cdots n_k)=1$.
The classical theory of one-variable arithmetical functions extends nicely to the multi-variable case with this definition.  For example, we can define the convolution $f\star g$ of two functions $f$ and $g$ via
$$ (f\star g )(n_1, ..., n_k) = \sum_{d_1|n_1, \, d_2|n_2, \, ..., d_k|n_k} f(d_1, ..., d_k) g(n_1/d_1, ..., n_k/d_k).
$$
If $f$ and $g$ are multiplicative, then it is easy to see that $f\star g$ is also multiplicative.

For multiplicative functions $f$, we can introduce a formal Dirichlet series of several variables along with an Euler product:
$$\sum_{\underline{n}=\underline{1} }^\infty  {f(n_1, ..., n_k)\over n_1^{s_1} \cdots n_k^{s_k}} = \prod_p \left( \sum_{v_1, ..., v_k=0}^\infty {f(p^{v_1}, ..., p^{v_k})
\over p^{v_1s_1} \cdots p^{v_ks_k}}\right).  $$
In our context, the function we will study is
\[f(n_1, ..., n_k) := \sum_{d_1|n_1, \, d_2|n_2, \, ..., d_k|n_k} \mu(n_1/d_1) \cdots \mu(n_k/d_k) g(d_1, ..., d_k) \numberthis\label{fdefn}\]
where 
\[g(n_1, ..., n_k) := {n_1 \cdots n_k \over [n_1, ..., n_k]}, \]
where $[n_1, ..., n_k]$ denotes the least common multiple of $n_1, ..., n_k$.  Since $g$ is multiplicative, we see
that $f$ is multiplicative by our remarks above.
\begin{thm}\label{drichlet series lemma}
    For the function $f(n_1,\cdots,n_k)$, we have 
    \[\sum_{n_1,\cdots,n_k=1}^{\infty}\frac{f(n_1,\cdots,n_k)}{n_1^{s_1}\cdots n_k^{s_k}}=\left( \prod_{\substack{I\subseteq [k]\\ |I|\ge 2}}\zeta(s_{I}-|I|+1) \right)E(s_1,\cdots,s_k), \numberthis \label{main}\]
    where $[k]:=\{1,\cdots,k\}$ and for any subset $I=\{l_1,\cdots,l_r\}$ of $[k]$, we have $s_{I}:=s_{l_1}+\cdots+s_{l_r}$ and 
  $E(s_1, ..., s_k)$ is a Dirichlet series absolutely
  convergent for $\Re(s_i)>1 - 1/k$.
\end{thm}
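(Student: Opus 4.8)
First I would compute the Euler product of $F(\s)$ one prime at a time. Since $g$ is multiplicative and $f=g\star(\mu^{\otimes k})$, where $\mu^{\otimes k}$ denotes the multiplicative function $(m_1,\dots,m_k)\mapsto \mu(m_1)\cdots\mu(m_k)$, the function $f$ is multiplicative, so $F(\s)=\prod_p F_p(\s)$ with $F_p(\s)=\sum_{\v\in\Z_{\ge 0}^k}f(p^{v_1},\dots,p^{v_k})\,p^{-\l\v,\s\r}$. Because the Dirichlet series of a convolution is the product of the two Dirichlet series, and $\sum_{m\ge1}\mu(m)m^{-s}=\prod_p(1-p^{-s})$, we obtain
\[
F_p(\s)=\Bigl(\prod_{i=1}^{k}(1-p^{-s_i})\Bigr)G_p(\s),\qquad
G_p(\s):=\sum_{\a\in\Z_{\ge 0}^k}p^{\,(a_1+\cdots+a_k)-\max_i a_i}\,p^{-\l\a,\s\r}.
\]
Thus everything reduces to evaluating the single local sum $G_p$.

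The key step is a level-set (``layer cake'') decomposition of $G_p$. To $\a\in\Z_{\ge0}^k$ associate the nested family $S_j:=\{i\in[k]:a_i\ge j\}$ for $j\ge1$; then $\a\mapsto(S_1,S_2,\dots)$ is a bijection onto weakly decreasing, eventually empty chains of subsets of $[k]$, under which $\sum_i a_i=\sum_{j\ge1}|S_j|$, $\max_i a_i=\#\{j:S_j\ne\emptyset\}$, and $\l\a,\s\r=\sum_{j\ge1}s_{S_j}$. Writing $w_I:=p^{-(s_I-|I|+1)}$ for $\emptyset\ne I\subseteq[k]$ (so $w_{\{i\}}=p^{-s_i}$), each nonempty layer $S_j$ contributes exactly $w_{S_j}$, hence
\[
G_p(\s)=\sum_{m\ge0}\ \sum_{\substack{I_1\supseteq\cdots\supseteq I_m\\ \emptyset\ne I_j\subseteq[k]}}\ \prod_{j=1}^{m}w_{I_j}
=\sum_{\substack{J_1\supsetneq\cdots\supsetneq J_r\\ \emptyset\ne J_t\subseteq[k]}}\ \prod_{t=1}^{r}\frac{w_{J_t}}{1-w_{J_t}},
\]
the second form coming from summing the geometric series in the multiplicity of each distinct value. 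In particular $G_p$ is a rational function of the $w_I$ with exactly a simple pole along each hyperplane $w_I=1$ (i.e.\ $s_I=|I|$) and no others, so $\prod_{\emptyset\ne I\subseteq[k]}(1-w_I)$ clears all of them and
\[
E_p(\s):=\Bigl(\prod_{\emptyset\ne I\subseteq[k]}(1-w_I)\Bigr)G_p(\s)=\sum_{\mathcal J}\ \prod_{J\in\mathcal J}w_J\ \prod_{\substack{\emptyset\ne I\subseteq[k]\\ I\notin\mathcal J}}(1-w_I)
\]
is a polynomial in the $w_I$, the sum running over all chains $\mathcal J$ of nonempty subsets of $[k]$, the empty chain included.

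The factorisation now drops out: the $k$ factors $1-w_{\{i\}}=1-p^{-s_i}$ are precisely the ones produced by $\mu^{\otimes k}$, so $F_p(\s)=E_p(\s)/\prod_{|I|\ge2}(1-w_I)$, and taking the product over $p$ with $\zeta(s)=\prod_p(1-p^{-s})^{-1}$ gives
\[
F(\s)=\Bigl(\prod_{\substack{I\subseteq[k]\\ |I|\ge2}}\zeta\bigl(s_I-|I|+1\bigr)\Bigr)E(\s),\qquad E(\s):=\prod_p E_p(\s).
\]
To finish I would show that $\prod_p E_p$ converges absolutely for $\Re(s_i)>1-1/k$. Expanding $E_p$: its constant term is $1$ (only $\mathcal J=\emptyset$), and — crucially — the coefficient of each single $w_J$ vanishes, because the contributions from $\mathcal J=\emptyset$ and $\mathcal J=\{J\}$ cancel; hence $E_p(\s)=1+\bigl(\text{a sum of monomials of degree}\ge2\text{ in the }w_I\bigr)$. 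Estimating the finitely many surviving monomials through $\bigl|\prod_{I\in S}w_I\bigr|=p^{-\sum_{I\in S}(\Re s_I-|I|+1)}$ yields $E_p(\s)=1+\BigOk{p^{-1-\eta}}$ with $\eta=\eta(\s)>0$ throughout the region $\Re(s_i)>1-1/k$, so that $\sum_p\lvert E_p(\s)-1\rvert<\infty$ and $E$ is absolutely convergent there.

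The hard part is the combinatorics carried by $G_p$: proving the level-set bijection together with the three exponent identities, checking that $\prod_{\emptyset\ne I\subseteq[k]}(1-w_I)$ cancels the poles of $G_p$ exactly (so that $E_p$ is genuinely a polynomial with vanishing linear part), and then pushing the degree-$\ge2$ estimate far enough left to reach the half-planes $\Re(s_i)>1-1/k$. Multiplicativity, the Euler product, and the extraction of the $\mu^{\otimes k}$ and $\zeta$ factors are routine.
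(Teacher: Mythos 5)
Your route is genuinely different from the paper's, and its combinatorial core is both correct and cleaner. The paper evaluates the local factor of the $g$-series by splitting according to $\max(v_1,\dots,v_k)=0,1,\ge 2$, estimating the $\max\ge 2$ tail crudely and reading the zeta factors off the $\max\le 1$ part; you instead obtain an exact closed form for the whole local sum via the level-set bijection and the chain/geometric-series resummation. The bijection, the three exponent identities, the multilinearity of $E_p$ in the $w_I$, the constant term $1$ and the vanishing of the linear part are all right, and for $k=2$ your formula collapses to $E_p=1-p^{-s_1-s_2}$, i.e.\ $E=1/\zeta(s_1+s_2)$, which is the known answer. This is a real gain in transparency.

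The gap is in the final estimate, and it is not cosmetic: the assertion that every surviving monomial satisfies $\bigl|\prod_{I\in S}w_I\bigr|\le p^{-1-\eta}$ throughout $\Re(s_i)>1-1/k$ is false for $k\ge 3$. Your four-term cancellation kills the coefficient of $w_Iw_J$ only when $\{I,J\}$ is a chain; for \emph{incomparable} $I,J$ the coefficient is $-1$, and then $\sum_{I\in S}(\Re s_I-|I|+1)=\Re s_I+\Re s_J-|I|-|J|+2$ can be as small as $2-(2k-2)/k=2/k<1$. Concretely, for $k=3$, $I=\{1,2\}$, $J=\{1,3\}$ the surviving monomial is $-p^{\,2-2s_1-s_2-s_3}$ (a direct check from $f(p^2,p,p)=0$ and $f(p,p,1)=f(p,1,p)=p-1$ gives the exact coefficient $-p^2+2p$ of $p^{-2s_1-s_2-s_3}$ in $E_p$), and $\sum_p p^{\,2-4\sigma}$ diverges for $\sigma\le 3/4$, while $1-1/k=2/3<3/4$. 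So the convergence claim, which \emph{is} the content of the theorem here, cannot be reached this way; indeed the stated region $\Re(s_i)>1-1/k$ appears to be wrong for $k\ge 3$, and the paper's own proof suffers from the same defect (its $E^{**}$ and its $\max\ge 2$ tail are likewise only controlled in a smaller region). What your computation does prove, and what the rest of the paper actually uses, is absolute convergence in $\Re(s_i)>1-\delta_k$ for some $\delta_k>0$: at $\Re(s_i)=1$ each $|w_I|=p^{-1}$, so every surviving (degree $\ge 2$) monomial has modulus $\le p^{-2}$, and since there are finitely many monomials with bounded coefficients this persists as $E_p=1+\operatorname{O}_k(p^{-1-\eta})$ for $\Re(s_i)>1-\frac{1}{2k}$ (and with a little more care, using that $[k]$ never occurs in a surviving degree-two monomial, for $\Re(s_i)>1-\frac{1}{2k-2}$). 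You should state and prove the weaker region rather than claim $1-1/k$.
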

\begin{proof}
That a factorization of the form (\ref{main}) exists is easily proved as follows.
    We first note that $f(d_1,\cdots,d_k)$ is a  convolution of multiplicative functions. Therefore, from \eqref{fdefn}, we have 
    \begin{align*}
        \sum_{n_1,\cdots,n_k=1}^{\infty}\frac{f(n_1,\cdots,n_k)}{n_1^{s_1}\cdots n_k^{s_k}}&=  \sum_{d_1,\cdots,d_k=1}^{\infty}\frac{g(d_1,\cdots,d_k)}{d_1^{s_1}\cdots d_k^{s_k}}\sum_{e_1,\cdots,e_k=1}^{\infty}\frac{\mu(e_1)\cdots \mu(e_k)}{e_1^{s_1}\cdots e_k^{s_k}}\\
         & =\frac{1}{\zeta(s_1)\cdots\zeta(s_k) }\sum_{d_1,\cdots,d_k=1}^{\infty}\frac{g(d_1,\cdots,d_k)}{d_1^{s_1}\cdots d_k^{s_k}}\numberthis\label{seriesf}
    \end{align*}
    since 
    $${1\over \zeta(s)} = \sum_{n=1}^\infty {\mu(n) \over n^s}. $$
    We examine the series on the right of (\ref{seriesf}) as follows.
    \begin{align*}
        \sum_{d_1,\cdots,d_k=1}^{\infty}\frac{g(d_1,\cdots,d_k)}{d_1^{s_1}\cdots d_k^{s_k}}&=\prod_{p}\left(\sum_{v_1,\cdots,v_k=0}^{\infty}\frac{p^{v_1+\cdots+v_k-\max(v_1,\cdots,v_k)}}{p^{s_1v_1+\cdots+s_kv_k}}\right)\\
        & \prod_{p}\left(\sum_{n=0}^{\infty}p^{-n}\sum_{\substack{v_1,\cdots,v_k=0\\\max(v_1,\cdots,v_k)=n}}^{\infty}\frac{p^{v_1+\cdots+v_k}}{p^{s_1v_1+\cdots+s_kv_k}}\right).\numberthis\label{sumequation}
    \end{align*}
    The Euler factor can be written as
    \[1 + 
    p^{-1}\sum_{\substack{v_1,\cdots,v_k=0\\\max(v_1,\cdots,v_k)=1}}^{\infty}\frac{p^{v_1+\cdots+v_k}}{p^{s_1v_1+\cdots+s_kv_k}} + 
    \sum_{n=2}^{\infty}p^{-n}\sum_{\substack{v_1,\cdots,v_k=0\\\max(v_1,\cdots,v_k)=n}}^{\infty}\frac{p^{v_1+\cdots+v_k}}{p^{s_1v_1+\cdots+s_kv_k}} . \numberthis \label{two}\]
    The inner sum in the second summation is actually
    a finite sum with at most $(n+1)^k$ terms and 
    with $\sigma = \Re(s_i)$, it is easily estimated to be 
    $$ \ll (n+1)^k p^{kn(1-\sigma)}.$$
    This means that 
    $$ \sum_p  \sum_{n=2}^{\infty}p^{-n}\sum_{\substack{v_1,\cdots,v_k=0\\\max(v_1,\cdots,v_k)=n}}^{\infty}\frac{p^{v_1+\cdots+v_k}}{p^{s_1v_1+\cdots+s_kv_k}}  $$
    converges absolutely for $\Re(s_i)> 1-1/k$.  
    We can therefore factor the Euler product in 
     (\ref{sumequation}) to get 
    $$ \sum_{d_1,\cdots,d_k=1}^{\infty}\frac{g(d_1,\cdots,d_k)}{d_1^{s_1}\cdots d_k^{s_k}} =
         \left( \prod_{p}\left( 1 + 
    p^{-1}\sum_{\substack{v_1,\cdots,v_k=0\\\max(v_1,\cdots,v_k)=1}}^{\infty}\frac{p^{v_1+\cdots+v_k}}{p^{s_1v_1+\cdots+s_kv_k}}\right)\right) E^*(s_1, ..., s_k), $$
    where $E^*(s_1, ..., s_k)$ is a Dirichlet series absolutely convergent in $\Re(s_i)>1 - {1\over k}$.  
    The Euler product above can be analyzed as follows.  The $p$-Euler factor can be written as
    $$ 1+ {1\over p}  \sum_{\emptyset\ne I\subseteq [k]}
     \prod_{i\in I} pT_i^{v_i} 
    $$
    where $T_i=p^{-s_i}$.  This observation allows us
    to further factor the Euler product as
    $$ \left( \prod_{\emptyset\ne I\subseteq [k]}\zeta(s_I - |I|+1) \right) E^{**}(s_1, ..., s_k) $$
    where $E^{**}(s_1, ..., s_k)$ is a Dirichlet
    series absolutely convergent for $\Re(s_i)>1/2$. 
    Combining all these observations and setting 
    $$E(s_1, ..., s_k) = E^*(s_1, ..., s_k)E^{**}(s_1, ..., s_k) ,$$
    we obtain
    $$ \sum_{d_1,\cdots,d_k=1}^{\infty}\frac{g(d_1,\cdots,d_k)}{d_1^{s_1}\cdots d_k^{s_k}}
    = \left( \prod_{\emptyset\ne I\subseteq [k]}\zeta(s_I - |I|+1) \right) E(s_1, ..., s_k) .
    $$
    Taking into account   (\ref{seriesf})
    and noting that the singleton sets are removed 
    from our product of zeta functions, we obtain
    (\ref{main}), as claimed. 
    \end{proof}

    \begin{rem}
        Though it is not needed for our purposes, We can 
        determine $E(s_1, ..., s_k)$ very explicitly:
      \begin{align*}
     E(s_1,\cdots,s_k)&
     =\prod_p\frac{\sum_{\emptyset\ne I\subseteq [k]}(-1)^{|I|}(p^{|I|-s_I}-1)\prod_{\substack{\emptyset\ne J\subseteq [k]\\I\ne J}}(1-p^{|J|-s_J-1})}{1+\sum_{\emptyset\ne I\subseteq [k]}(-1)^{|I|}p^{|I|-s_I}}
 \end{align*}  
   To see this, let 
     $p^{-s_1}=T_1, \cdots, p^{-s_k}=T_k$
     as before.  Then, 
    \begin{align*}
\sum_{\substack{v_1,\cdots,v_k=0\\\max(v_1,\cdots,v_k)=n}}^{\infty}\frac{p^{v_1+\cdots+v_k}}{p^{s_1v_1+\cdots+s_kv_k}}&=\sum_{\substack{v_1,\cdots,v_k=0\\\max(v_1,\cdots,v_k)\le n}}^{\infty}{p^{v_1+\cdots+v_k}}{T_1^{v_1}\cdots T_k^{v_k}}-\sum_{\substack{v_1,\cdots,v_k=0\\\max(v_1,\cdots,v_k)\le n-1}}^{\infty}{p^{v_1+\cdots+v_k}}{T_1^{v_1}\cdots T_k^{v_k}}\\
&=\prod_{i=1}^k\left(\sum_{v_i\le n}p^{v_i}T_i^{v_i}\right)-\prod_{i=1}^k\left(\sum_{v_i\le n-1}p^{v_i}T_i^{v_i}\right)\\
&=\prod_{i=1}^k\left(\frac{1-(pT_i)^{n+1}}{1-pT_i}\right)-\prod_{i=1}^k\left(\frac{1-(pT_i)^{n}}{1-pT_i}\right).\numberthis\label{sumequation in T}
    \end{align*}
    Now, multiplying this by $p^{-n}$ and summing from $n=0$ to $\infty$ gives, (using the abbreviation 
    $T_I=p^{-s_I}$), 
    $$
    \frac{1}{\prod_{i=1}^k(1-pT_i)}\sum_{n=0}^{\infty}\sum_{\emptyset\ne I\subseteq [k]}(-1)^{|I|}\left(p^{n(|I|-1)+|I|}T_I^{n+1}-p^{n(|I|-1)}T_I^{n}\right)$$
    $$
     = \frac{1}{\prod_{i=1}^k(1-pT_i)}\sum_{\emptyset\ne I\subset [k]}(-1)^{|I|}\left(\frac{p^{|I|}T_I}{1-p^{|I|-1}T_I}-\frac{1}{1-p^{|I|-1}T_I}\right)$$
   $$= \frac{1}{\prod_{i=1}^k(1-pT_i)}\sum_{\emptyset\ne I\subseteq [k]}(-1)^{|I|}\left(\frac{p^{|I|}T_I -1}{1-p^{|I|-1}T_I}\right)$$
    
   We therefore have from \eqref{sumequation} and \eqref{sumequation in T}, and the above calculation that
 \begin{align*}
     \sum_{d_1,\cdots,d_k=1}^{\infty}\frac{g(d_1,\cdots,d_k)}{d_1^{s_1}\cdots d_k^{s_k}}
   &= \prod_{p}\frac{1}{\prod_{i=1}^k(1-pT_i)}\sum_{\emptyset\ne I\subseteq [k]}(-1)^{|I|}\left(\frac{p^{|I|}T_I -1}{1-p^{|I|-1}T_I}\right)\\  
     &=\left( \prod_{\emptyset\ne I\subseteq [k]}\zeta(s_{I}-|I|+1)\right) E(s_1,\cdots,s_k).\numberthis\label{gseries}
 \end{align*}
 Here, 
 \begin{align*}
     E(s_1,\cdots,s_k)&=\prod_p\frac{\sum_{\emptyset\ne I\subset [k]}(-1)^{|I|}(p^{|I|}T_I-1)\prod_{\substack{\emptyset\ne J\subset [k]\\I\ne J}}(1-p^{|J|-1}T_J)}{\prod_{i=1}^k(1-pT_i)}\\&
     =\prod_p\frac{\sum_{\emptyset\ne I\subseteq [k]}(-1)^{|I|}(p^{|I|}T_I-1)\prod_{\substack{\emptyset\ne J\subseteq [k]\\I\ne J}}(1-p^{|J|-1}T_J)}{1+\sum_{\emptyset\ne I\subseteq [k]}(-1)^{|I|}p^{|I|}T_{I}}.
 \end{align*}
 From this explicit expression, the region of absolute 
 convergence for $E(s_1, ..., s_k)$ 
 is not immediately clear and thus, we have opted for the more expedient method in the proof of our theorem.
  \end{rem}
\section{\bf Non-negativity of $f(n_1, ..., n_k) $}
In this section, we will show that $f(n_1, ..., n_k)$
is a non-negative function, thus paving the way for
an application of the Br\`eteche Tauberian theorem.
\begin{thm}\label{non negativity theoerem}
$f(n_1, ..., n_k) \geq 0$ for all $(n_1, ..., n_k)\in \N^k$.
\end{thm}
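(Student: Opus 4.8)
The plan is to reduce to prime powers by multiplicativity and then evaluate $f$ at a prime‑power tuple in closed form. Since $f$ is multiplicative (Section 3, $f$ being a convolution of multiplicative functions), we have $f(n_1,\dots,n_k)=\prod_p f(p^{v_p(n_1)},\dots,p^{v_p(n_k)})$, so it suffices to prove $f(p^{a_1},\dots,p^{a_k})\ge 0$ for every prime $p$ and every $(a_1,\dots,a_k)\in\Z_{\ge 0}^k$. Because $\mu(p^j)=0$ for $j\ge 2$, $\mu(1)=1$ and $\mu(p)=-1$, the convolution \eqref{fdefn} collapses to a signed sum over subsets: each contributing divisor $d_i$ of $p^{a_i}$ is either $p^{a_i}$ or (when $a_i\ge 1$) $p^{a_i-1}$. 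Writing $T=\{i\in[k]:a_i\ge 1\}$ and, for $S\subseteq T$, letting $a^{(S)}$ be the tuple with $a^{(S)}_i=a_i-1$ for $i\in S$ and $a^{(S)}_i=a_i$ otherwise, and recalling that on prime powers $g(p^{v_1},\dots,p^{v_k})=p^{\,v_1+\cdots+v_k-\max_i v_i}$, we obtain
\[
f(p^{a_1},\dots,p^{a_k})=\sum_{S\subseteq T}(-1)^{|S|}\,g\bigl(p^{a^{(S)}_1},\dots,p^{a^{(S)}_k}\bigr).
\]

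Next I would evaluate this sum. If $m:=\max_i a_i=0$ then $T=\emptyset$ and $f(1,\dots,1)=g(1,\dots,1)=1\ge 0$; so assume $m\ge 1$ and put $M=\{i\in[k]:a_i=m\}$, $\ell=|M|\ge 1$ and $A=\sum_i a_i$, noting $M\subseteq T$. The key point is that $\max_i a^{(S)}_i=m$ whenever $S\not\supseteq M$ — some maximal coordinate is left untouched — and $\max_i a^{(S)}_i=m-1$ whenever $S\supseteq M$, since then every maximal coordinate drops to $m-1$ and no other coordinate can reach $m$. Splitting the sum at this threshold, and in each piece summing the binomial expansion, which factors as $\prod_i(1-1/p)$ over the free indices — once over all of $T$, and once over $T\setminus M$ with $S$ forced to contain $M$ — I expect the bookkeeping to yield the closed form
\[
f(p^{a_1},\dots,p^{a_k})=p^{A-m}\Bigl(1-\tfrac1p\Bigr)^{|T|-\ell}\cdot\frac{p-1}{p^{\ell}}\cdot\bigl((p-1)^{\ell-1}+(-1)^{\ell}\bigr).
\]

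Non‑negativity is then immediate: since $p\ge 2$, the factors $p^{A-m}$, $\bigl(1-1/p\bigr)^{|T|-\ell}$ and $(p-1)/p^{\ell}$ are strictly positive, while $(p-1)^{\ell-1}+(-1)^{\ell}$ equals $(p-1)^{\ell-1}+1>0$ when $\ell$ is even, and $(p-1)^{\ell-1}-1\ge 1-1=0$ when $\ell$ is odd (using $p-1\ge 1$ and $\ell-1\ge 0$). As a sanity check, for $k=2$, where $g(d_1,d_2)=\gcd(d_1,d_2)$, this gives $f(p^a,p^b)=0$ for $a\ne b$ and $f(p^a,p^a)=p^{a-1}(p-1)$, so $f$ is supported on the diagonal $n_1=n_2$, as is well known. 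The only delicate part is the second step — tracking precisely how the exponent $\max_i a^{(S)}_i$ jumps as $S$ crosses the threshold $S\supseteq M$, separating the geometric contributions of the indices in $M$ from those in $T\setminus M$, and keeping the degenerate configurations straight (the case $m=0$, and coordinates with $a_i=0$, which never belong to $T$). The rest is routine algebra.
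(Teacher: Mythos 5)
Your proof is correct and follows essentially the same strategy as the paper's: reduce to prime powers by multiplicativity, expand the M\"obius convolution as a signed sum over subsets, and split according to whether the subset contains all indices achieving the maximal exponent, with non-negativity coming down to the sign of $(p-1)^{\ell-1}+(-1)^{\ell}$. Your version is a mild streamlining — the single closed form (which I checked and which agrees with the paper's final expression when all $a_i\ge 1$) absorbs both the paper's induction step for vanishing exponents and its separate treatment of the case where the maximum is attained only once (your $\ell=1$, where the formula gives $0$) — but the underlying computation is the same.
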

\begin{proof}  Since $f$ is a convolution of two multiplicative functions, it is also multiplicative.  Therefore, to prove 
the lemma, it suffices to show for each prime $p$, 
$$f(p^{v_1}, p^{v_2}, ..., p^{v_k}) \geq 0, \qquad v_1, ..., v_k \geq 0.  $$
Since $f$ is symmetric, we can suppose without any loss of generality that $v_1\geq v_2 \geq  \cdots \geq v_k$.
We proceed by induction on $k$.  For $k=1$, the result is clear.  
We may also suppose that all $v_i\geq 1$ for otherwise, we are again done by induction.
If $v_1>v_2$, then noting that
\begin{equation}\label{key}
\qquad f(p^{v_1}, p^{v_2}, ..., p^{v_k})=
\end{equation}
$$
 \sum_{d_2|p^{v_2},..., d_k| p^{v_k}} \mu(d_2) \cdots \mu(d_k) \left\{ g\left( p^{v_1}, {p^{v_2}\over d_2}, ..., {p^{v_k}\over d_k}
\right) - g\left(p^{v_1-1}, {p^{v_2}\over d_2}, ..., 
{p^{v_k}\over d_k}\right) \right\},$$
we have 
$$g\left( p^{v_1}, {p^{v_2}\over d_2}, ..., {p^{v_k}\over d_k}
\right) = g\left( 1, {p^{v_2}\over d_2}, ..., {p^{v_k}\over d_k}
\right) \left( p^{v_1} , \Big[ {p^{v_2}\over d_2}, ..., {p^{v_k}\over d_k}\Big] \right) $$
and
$$g\left( p^{v_1-1}, {p^{v_2}\over d_2}, ..., {p^{v_k}\over d_k}
\right) = g\left( 1, {p^{v_2}\over d_2}, ..., {p^{v_k}\over d_k}
\right) \left( p^{v_1-1} , \Big[ {p^{v_2}\over d_2}, ..., {p^{v_k}\over d_k}\Big] \right).$$
We see in this case that the gcd in both cases is the same and so the term in braces in (\ref{key}) above is zero.
Now suppose that $v_1=v_2 =\cdots v_{\ell} > v_{\ell+1} \geq \cdots \geq v_k$.
We have
$$f(p^{v_1}, ..., p^{v_k}) = \sum_{d_1|p, ..., d_k|p} \mu(d_1) \cdots \mu(d_k) g(p^{v_1}/d_1, ..., p^{v_k}/d_k). $$
Noting that in the sum over divisors that each $d_i$ can only be $1$ or $p$, we arrange the sum as follows.
We write $d_i=p^{e_i}$  where $e_i=0$ or $1$.  
We can then identify each tuple $(d_1, ..., d_k)$ with a subset $I\subseteq [k]$ where $I=\{ i: e_i=1\}$.  
Our sum becomes
$$f(p^{v_1}, ..., p^{v_k}) = \sum_{I\subseteq [k]} (-1)^{|I|} p^{s - |I| - \max(v_i - e_i: 1\leq i\leq k)}, $$
where 
$$s= v_1 + \cdots v_k. $$
We let $I_0=\{ 1, 2, ..., \ell\}$ and set $s'= v_2 + \cdots + v_k. $  We split the sum on the right into three parts:
$$\sum_{I: I \cap I_0=\emptyset} \qquad + \qquad \sum_{I: \emptyset \neq I \cap I_0 \neq I_0} \qquad + \qquad 
\sum_{I: I\supseteq I_0} . $$
Letting $J= \{ \ell+1 , \cdots , k\}$, the first part is equal to
$$ \sum_{I\subseteq J} (-1)^{|I|} p^{s' - |I|} = p^{s'} \left( 1 - {1\over p}\right)^{k-\ell},$$
because in this case  $\max(v_i-e_i: 1\leq i \leq k) = v_1$.  
In the second part, we again have $\max(v_i-e_i: 1\leq i \leq k) = v_1$ so that the second part is equal to
$$ \sum_{I: \emptyset \neq I \cap I_0 \neq I_0} (-1)^{|I|} p^{s'-|I|} =p^{s'}  \sum_{j=1}^{\ell - 1} {\ell \choose j} (-1)^j p^{-j}
\left( 1 - {1\over p}\right)^{k-\ell}.$$
Finally, in the third part,  $\max(v_i-e_i: 1\leq i\leq k)= v_1-1$ so that the third part equals
$$\sum_{I: I\supseteq I_0} (-1)^{|I|} p^{s' +1 - |I|} = p^{s'+1-\ell} (-1)^\ell \left( 1 - {1\over p}\right)^{k-\ell}. $$
Notice that the first part and the second part combine to give
$$p^{s'} \sum_{j=0}^{\ell - 1} {\ell \choose j} (-1)^j p^{-j}
\left( 1 - {1\over p}\right)^{k-\ell},$$
since the term corresponding to $j=0$ is the contribution from the first part.
Putting everything together gives
$$ p^{s'} \sum_{j=0}^{\ell } {\ell \choose j} (-1)^j p^{-j}
\left( 1 - {1\over p}\right)^{k-\ell}    - (-1)^\ell p^{s' - \ell} \left( 1 - {1\over p}\right)^{k-\ell} + p^{s'+1-\ell} (-1)^\ell \left( 1 - {1\over p}\right)^{k-\ell}. $$
This simplifies to
$$ p^{s'}\left( 1 - {1\over p}\right)^k + (-1)^{\ell} \left( 1 - {1\over p}\right)^{k-\ell} \left( p^{s'+1-\ell} - p^{s'-\ell}\right). $$
Noting that $\ell\geq 2$, we see that this is certainly positive if $\ell$ is even.  
If $\ell$ is odd, the term is equal to
$$ p^{s'}\left( 1 - {1\over p}\right)^k - \left( 1 - {1\over p}\right)^{k-\ell} \left( p^{s'+1-\ell} - p^{s'-\ell}\right). $$
We easily see that this reduces to checking that
$$ p^{s'} (p-1)^\ell \geq p^\ell  \left( p^{s'+1-\ell} - p^{s'-\ell}\right) = p^{s'}(p-1),$$
which is evidently true.  This completes the proof of non-negativity.
\end{proof}
\section{\bf Average order of $f(n_1, ..., n_k) $}
The average order of the function $f(n_1, ..., n_k)$ is the most essential step in the proof of Theorem \ref{maintheorem}. In this section, we estimate an average of  $f(n_1, ..., n_k)$ as an application of Theorems \ref{tauberian 1} and \ref{tauberian 2}.
\begin{thm}\label{average theorem}
 For $0<\theta<1$, we have as $x\to \infty$
 \begin{align*}
     \sum_{n_1,\cdots,n_k\le x}f(n_1, ..., n_k)=x^kQ(\log x)+\BigO{x^{k-\theta}},
 \end{align*}
 where $Q\in \R[X]$ is a polynomial of exact degree $2^k-2k-1.$
\end{thm}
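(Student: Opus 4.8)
The plan is to deduce the stated asymptotic directly from the Br\`eteche Tauberian theorems (Theorems~\ref{tauberian 1} and~\ref{tauberian 2}), applied to $f$, which is non-negative by Theorem~\ref{non negativity theoerem} and whose Dirichlet series was factored in Theorem~\ref{drichlet series lemma}. I would take $\c=(1,\dots,1)$. Since $E$ is absolutely convergent for $\Re(s_i)>1-1/k$, the factorization
\[
F(\s)=\Big(\prod_{\substack{I\subseteq[k]\\|I|\ge2}}\zeta(s_I-|I|+1)\Big)E(\s)
\]
shows that $F$ is absolutely convergent whenever $\Re(s_i)>1$ for all $i$, which is hypothesis~(1) of Theorem~\ref{tauberian 1}. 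After the shift, $F(\s+\c)=\big(\prod_{|I|\ge2}\zeta(s_I+1)\big)E(\s+\c)$, whose polar divisors are precisely the hyperplanes $s_I=0$ with $|I|\ge2$. I therefore take the linear forms $l^{(I)}(\s)=s_I=\sum_{i\in I}s_i$ indexed by the subsets $I\subseteq[k]$ with $|I|\ge2$, so that
\[
q=\#\{I\subseteq[k]:|I|\ge2\}=2^k-k-1,
\]
together with the auxiliary forms $h^{(i)}(\s)=e_i^*(\s)=s_i$, $1\le i\le k$. Since $\c$ has no vanishing coordinate, $J=\emptyset$ and $w=0$, and the associated function is
\[
H(\s)=F(\s+\c)\prod_{\substack{I\subseteq[k]\\|I|\ge2}}l^{(I)}(\s)=\Big(\prod_{\substack{I\subseteq[k]\\|I|\ge2}}s_I\,\zeta(s_I+1)\Big)E(\s+\c).
\]

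Because $\zeta$ is holomorphic off $s=1$ and has there a simple pole of residue $1$, each factor $s_I\,\zeta(s_I+1)$ is entire, while $E(\s+\c)$ is holomorphic for $\Re(s_i)>-1/k$; hence $H$ continues holomorphically to $D(\delta_1,\delta_3)$ for any $\delta_1>0$ and any $0<\delta_3<1/k$. Hypothesis~(2) also requires a polynomial bound for $H$ on $D(\delta_1-\epsilon,\delta_3-\epsilon')$; I would obtain this from standard convexity estimates for $\zeta(\sigma+it)$ in vertical strips, each factor $s_I\zeta(s_I+1)$ being dominated by a power of $(1+|\Im s_I|)$ whose exponent grows linearly as $\Re s_I$ decreases toward $-\delta_1$, while $E(\s+\c)$ stays bounded; a suitable choice of $\delta_2$ (with $\delta_1,\delta_3$ small) then casts this in the precise form demanded by Theorem~\ref{tauberian 1}. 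Applying that theorem with $\boldsymbol{\beta}=(1,\dots,1)$, for which $\l\c,\boldsymbol{\beta}\r=k$, yields
\[
\sum_{n_1,\dots,n_k\le x}f(n_1,\dots,n_k)=x^kQ(\log x)+\BigO{x^{k-\theta}}
\]
for some $\theta>0$---which we may take in $(0,1)$---and a polynomial $Q$ with $\deg Q\le q+w-\operatorname{Rank}\{l^{(I)}\}$.

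To obtain the exact degree I would invoke Theorem~\ref{tauberian 2}. The decisive input---and the only place the hypothesis $k\ge3$ is used---is that the forms $\{s_I:|I|\ge2\}$ already span $\R^k$: for distinct indices $i,j,\ell$ one has $s_i=\tfrac12\big(s_{\{i,j\}}+s_{\{i,\ell\}}-s_{\{j,\ell\}}\big)$, so every $e_i^*$ lies in their span and $\operatorname{Rank}\{l^{(I)}\}=k$. Next, $H(\boldsymbol 0)=E(\c)$ since each $s_I\zeta(s_I+1)\to1$ as $\s\to\boldsymbol 0$; expanding $E$ as an Euler product gives $E(\c)=\prod_p F_p(\c)(1-p^{-1})^{2^k-k-1}$, in which every local factor satisfies $F_p(\c)\ge f(1,\dots,1)=1>0$ and the product is absolutely convergent at $\c$, so $H(\boldsymbol 0)=E(\c)>0$. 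Finally $\mathcal{B}=\sum_i e_i^*$ lies in $\operatorname{con}^*\{l^{(I)}\}$, because $(1,\dots,1)=\sum_{|I|\ge2}\lambda_I s_I$ admits a solution with all $\lambda_I>0$ (by symmetry it is enough to choose $\lambda_m>0$ for $2\le m\le k$ with $\sum_{m=2}^k\binom{k-1}{m-1}\lambda_m=1$). Theorem~\ref{tauberian 2} then gives $\deg Q=q+w-k=(2^k-k-1)-k=2^k-2k-1$, the asserted exact degree.

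The main obstacle is the verification of hypothesis~(2) of Theorem~\ref{tauberian 1}: propagating the holomorphy of $H$ across the coordinate walls $\Re(s_i)=0$ into a genuine tube domain and establishing there a uniform bound of exactly the prescribed polynomial shape, which has to combine the polar structure and vertical-strip growth of $\zeta$ with the precise region of absolute convergence of $E$ furnished by Theorem~\ref{drichlet series lemma}. The remaining ingredients---the choice $\c=(1,\dots,1)$, the count $q=2^k-k-1$, the rank being $k$, and the positivity $E(\c)>0$---amount to routine bookkeeping.
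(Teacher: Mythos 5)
Your proposal is correct and follows essentially the same route as the paper's proof: non-negativity (Theorem~\ref{non negativity theoerem}) and the factorization of Theorem~\ref{drichlet series lemma} feed into the Br\`eteche Tauberian theorems with $\c=(1,\dots,1)$, the $2^k-k-1$ linear forms $s_I$ ($|I|\ge 2$), $w=0$, rank $k$, and $H(\boldsymbol{0})\ne 0$, yielding exact degree $2^k-2k-1$. The growth estimate you flag as the main obstacle is dispatched in the paper by the standard bound $s_I\zeta(1+s_I)\ll_{\epsilon_0}(1+|s_I|)^{1-\frac{1}{2}\min(0,\Re s_I)+\epsilon_0}$ with $\delta_2=1/2$, and on the remaining bookkeeping (the rank identity, cone membership, and positivity of $E(\c)$) you are if anything more explicit than the paper.
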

\begin{proof}
     In Theorem \ref{non negativity theoerem}, we proved that  $f(n_1, ..., n_k) $ is non-negative and in Theorem \ref{drichlet series lemma}, we proved $f(n_1, ..., n_k)$ has an absolutely convergent series $F(\s)$ for $\Re(s_i)>1$ for all $1\le i\le k.$ This shows $f(n_1, ..., n_k)$ satisfies  $(1)$ of Theorem \ref{tauberian 1}. Next, we show that $f(n_1, ..., n_k)$ also satisfies  $(2)$ of Theorem \ref{tauberian 1}.  Write $\textbf{1}=(1,\cdots,1)$ then,  $F(\s+\textbf{1})$ is an absolutely convergent series for $\Re(s_i)>0$. Therefore, for the linear forms 
    $\prod_{\substack{I\subseteq [k]\\ |I|\ge 2}}s_{I}$,  define the function 
    \[H(\s):=F(\s+\textbf{1})\prod_{\substack{I\subseteq [k]\\ |I|\ge 2}}s_{I}.\]
    Since $c_i=1$ for all $1\le i\le k$, we take $q'=0$
   in the notation of Theorem \ref{tauberian 1}.  Furthermore, for any $\zeta(s_1+\cdots +s_{\ell}+1)$ in $F(\s+\textbf{1})$, there is a  linear form $(s_1+\cdots +s_{\ell})$ such that $$(s_1+\cdots +s_{\ell})\zeta(s_1+\cdots +s_{\ell}+1)$$ has analytic continuation on the plane $\Re{(s_1+\cdots +s_{\ell})}>-\epsilon$, where $\epsilon>0$ and for $I \subseteq K$, we have $|I|={\ell}\ge 2$. Therefore, $H(\s)$ also has analytic continuation on the plane $\Re{(s_1+\cdots +s_{\ell})}>-\epsilon$. Consider $h^{i}(\s)=s_i$, set $\delta_1=\delta_3=\epsilon$. Moreover, from Lemma \ref{drichlet series lemma}, $E(\s+\textbf{1})$   has analytic continuation on the plane $\Re{(s_1+\cdots +s_{\ell})}>-\epsilon$. We know that for $\Re{s_i}>-1$ and for all $\epsilon_0>0$
    \[s_I\zeta(1+s_I)\ll_{\epsilon_0}(1+|s_I|)^{1-\frac{1}{2}\min(0,\Re{s_I})+\epsilon_0}.\] The above argument shows that $H(\s)$ satisfies $(2)$ of Theorem \ref{tauberian 1} with $\delta_2=1/2$. Therefore, we have as $x\to \infty, $ 
 \begin{align*}
     \sum_{n_1,\cdots,n_k\le x}f(n_1, ..., n_k)=x^kQ(\log x)+\BigO{x^{k-\theta}},
 \end{align*}
 where $Q(\log x)$ is a polynomial of degree at most $2^k-2k-1$.
      
       Next, $c_i>0$ for all $1\le i\le k$, this implies $w=0.$ Again, it is easy to see that the rank of the collection of linear forms $s_{I}$ is $k$ and the interior of the cone generated by linear forms is the set $\mathcal{B}=\sum_{i=1}^k\beta_i\textbf{e}_i^*$ for $\boldsymbol{\beta}=(\beta_1,\cdots,\beta_k)\in(0,\infty)^k$ 
    and $\textbf{e}_i^*(\s)=s_i$. Also, as $s_{I}\to 0$, $s_{I}\zeta(s_I+1)\to 1$ and hence $H(\textbf{0})\ne 0$ Thus, from Theorem \ref{tauberian 2}, $\deg(Q)=2^k-2k-1.$ This gives the required result.
\end{proof}

\section{\bf Higher Moments of Ramanujan sums}
In this section, we provide a proof of Theorem \ref{maintheorem} using the average order of $f(n_1, ..., n_k)$ obtained in the previous section.
\begin{proof}[Proof of Theorem \ref{maintheorem}]
   From the definition of Ramanujan sums, we have
\begin{align*}
    S_k(x,y)&=\sum_{n\le y}\left(\sum_{q\le x}c_q(n)\right)^k=\sum_{n\le y}\sum_{q_1,\cdots,q_k\le x}\sum_{\substack{d_1|q_1\\d_1|n}}d_1\mu\left(\frac{q_1}{d_1}\right)\cdots\sum_{\substack{d_k|q_k\\d_k|n}}d_k\mu\left(\frac{q_k}{d_k}\right)\\&
    =\sum_{q_1,\cdots,q_k\le x}\sum_{\substack{d_1|q_1, \cdots,d_k|q_k}}d_1\cdots d_k\mu\left(\frac{q_1}{d_1}\right)\cdots \mu\left(\frac{q_k}{d_k}\right)\sum_{\substack{n\le y\\ [d_1,\cdots,d_k]|n}}1 \\&
    =\sum_{q_1,\cdots,q_k\le x}\sum_{\substack{d_1|q_1, \cdots, d_k|q_k}}d_1\cdots d_k\mu\left(\frac{q_1}{d_1}\right)\cdots \mu\left(\frac{q_k}{d_k}\right)\left({\frac{y}{[d_1,\cdots,d_k]}}+\BigO{1}\right)\\&
    =y\sum_{q_1,\cdots,q_k\le x}\sum_{\substack{d_1|q_1, \cdots, d_k|q_k}}\frac{d_1\cdots d_k}{[d_1,\cdots,d_k]}\mu\left(\frac{q_1}{d_1}\right)\cdots \mu\left(\frac{q_k}{d_k}\right)+\BigO{x^{2k}}.
\end{align*}
The inner sum is precisely $f(q_1, ..., q_k)$ in
our notation by virtue of (\ref{fdefn}).     Thus, 
from  Theorem \ref{average theorem}, for $0<\theta<1$, we have as $x\to \infty$
 \begin{align*}
     S_{k}(x,y)=yx^kQ(\log x)+\BigO{yx^{k-\theta}+x^{2k}},
 \end{align*}
 where $Q(\log x)$ is a polynomial of degree $2^k-2k-1.$  This completes the proof.
\end{proof}

\section{\bf Moments of Cohen Ramanujan sums}
In \cite{cohen1949extension}, Cohen generalized the Ramanujan sums in the following way:
\[c_q^{\beta}(n) :=\sum_{\substack{1\le j\le q^{\beta}\\(j,q^{\beta})_{\beta}=1}}e\left(\frac{jn}{q^{\beta}}\right)=\sum_{\substack{d|q\\d^{\beta}|n}}d^{\beta}\mu\left(\frac{q}{d}\right).\]We refer to these  sums as Cohen-Ramanujan sums. Here, $(m,n)_{\beta}$ denotes the generalized gcd  which is the largest $l^{\beta}$ dividing both $m$ and $n.$ In \cite{MR3600410}, Robles and Roy estimated the averages of moments of Cohen-Ramanujan sums but their result is correct only for the first and second moments. 
For higher moments, it is false.
Our analysis of the previous sections is amenable to treat this general case too.  
Therefore, applying  our method to obtain the higher moments of these sums, we get:
\begin{thm}\label{theorem2}
    For $k\ge 3$ and $y> x^k$, as $x\to \infty$, we have 
    \begin{align*}
   \sum_{n\le y}\left(\sum_{q\le x}  c_q^{\beta}(n)\right)^k=yx^{k(\beta+1)/2}Q(\log x)+\BigO{yx^{k(\beta+1)/2-\theta}},
 \end{align*}
 where $Q(\log x)$ is a polynomial of exact degree $2^k-2k-1$ and $0\le \theta\le 1.$
\end{thm}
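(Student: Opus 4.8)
The plan is to mirror, essentially verbatim, the chain of reasoning used to prove Theorem \ref{maintheorem}, replacing the role of $g(n_1,\dots,n_k) = n_1\cdots n_k/[n_1,\dots,n_k]$ by its $\beta$-analogue. Concretely, expanding the $k$-th power and swapping the order of summation as in the proof of Theorem \ref{maintheorem}, one obtains
\begin{align*}
\sum_{n\le y}\left(\sum_{q\le x}c_q^{\beta}(n)\right)^k
&= \sum_{q_1,\dots,q_k\le x}\ \sum_{d_1|q_1,\dots,d_k|q_k} d_1^{\beta}\cdots d_k^{\beta}\,\mu\!\left(\tfrac{q_1}{d_1}\right)\cdots\mu\!\left(\tfrac{q_k}{d_k}\right)\sum_{\substack{n\le y\\ [d_1^{\beta},\dots,d_k^{\beta}]\,|\,n}}1\\
&= y\sum_{q_1,\dots,q_k\le x}\ \sum_{d_1|q_1,\dots,d_k|q_k} \frac{d_1^{\beta}\cdots d_k^{\beta}}{[d_1^{\beta},\dots,d_k^{\beta}]}\,\mu\!\left(\tfrac{q_1}{d_1}\right)\cdots\mu\!\left(\tfrac{q_k}{d_k}\right) + \BigO{x^{2k}},
\end{align*}
since $[d_1^{\beta},\dots,d_k^{\beta}] = [d_1,\dots,d_k]^{\beta}$. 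The inner sum is $f_{\beta}(q_1,\dots,q_k)$, where $f_{\beta}$ is defined exactly as in \eqref{fdefn} but with $g$ replaced by $g_{\beta}(n_1,\dots,n_k) := (n_1\cdots n_k)^{\beta}/[n_1,\dots,n_k]^{\beta}$. So everything reduces to establishing a Theorem \ref{average theorem}-type asymptotic for $\sum_{n_1,\dots,n_k\le x} f_{\beta}(n_1,\dots,n_k)$.

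First I would record the local structure: $g_{\beta}$ is multiplicative, and on prime powers $g_{\beta}(p^{v_1},\dots,p^{v_k}) = p^{\beta(v_1+\cdots+v_k-\max(v_1,\dots,v_k))}$, so $f_{\beta}$ is multiplicative. Then I would redo the Euler-product computation of Theorem \ref{drichlet series lemma} with the exponent $v_1+\cdots+v_k$ replaced by $\beta(v_1+\cdots+v_k-\max)$; the same ``peel off the $\max = 1$ contribution'' argument yields
\[
\sum_{n_1,\dots,n_k=1}^{\infty}\frac{f_{\beta}(n_1,\dots,n_k)}{n_1^{s_1}\cdots n_k^{s_k}} = \left(\prod_{\substack{I\subseteq[k]\\ |I|\ge 2}}\zeta\bigl(s_I - \beta|I| + \beta\bigr)\right) E_{\beta}(s_1,\dots,s_k),
\]
with $E_{\beta}$ absolutely convergent in a region $\Re(s_i) > \beta - c_{k,\beta}$ for some positive $c_{k,\beta}$; the bookkeeping is identical since the tail sum over $\max\ge 2$ is bounded by $\sum_p\sum_{n\ge 2} p^{-n}(n+1)^k p^{k\beta n(1-\sigma)}$, convergent once $\Re(s_i)$ is slightly above $\beta(1-1/k)$. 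This puts the abscissae of convergence at $c_i = \beta$ for each $i$, so the ``pole'' of $\zeta(s_I - \beta|I|+\beta)$ at $s_I = \beta|I| - \beta + 1$ lies, after the shift $\s\mapsto\s+\beta\mathbf{1}$, on the hyperplane $s_I = 1 - \beta$; the relevant linear form to multiply by is still $s_I$ (more precisely $(s_I - (1-\beta))$, but as in the excerpt one simply uses $s_I$ after shifting so the singular set passes through a point where the factor vanishes), and one checks the Lindelöf-type bound $s_I\zeta(1+ s_I) \ll (1+|s_I|)^{1 - \frac12\min(0,\Re s_I) + \epsilon_0}$ exactly as before. Next I would verify non-negativity of $f_{\beta}$ by repeating the proof of Theorem \ref{non negativity theoerem} with every occurrence of $p$ in an exponent replaced by $p^{\beta}$; the three-part split of the sum over $I\subseteq[k]$ goes through verbatim and the final inequality becomes $p^{\beta s'}(p^{\beta}-1)^{\ell}\ge p^{\beta\ell}(p^{\beta(s'+1-\ell)} - p^{\beta(s'-\ell)}) = p^{\beta s'}(p^{\beta}-1)$, still evidently true for $\ell\ge 1$.

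With non-negativity, the Dirichlet-series factorization, and the growth bound in hand, Theorems \ref{tauberian 1} and \ref{tauberian 2} apply with the family of $2^k - k - 1$ linear forms $\{s_I : I\subseteq[k],\ |I|\ge 2\}$, which has rank $k$ and whose positive cone has the required interior, and with $w = 0$ since all $c_i = \beta > 0$; moreover $H(\mathbf 0)\ne 0$ because each $s_I\zeta(1+s_I)\to 1$. Hence $\sum_{n_1,\dots,n_k\le x} f_{\beta}(n_1,\dots,n_k) = x^{\langle\c,\mathbf 1\rangle}Q(\log x) + \BigO{x^{\langle\c,\mathbf 1\rangle-\theta}}$ with $\langle\c,\mathbf 1\rangle = k\beta$ and $\deg Q = (2^k - k - 1) + 0 - k = 2^k - 2k - 1$. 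Substituting back and absorbing the $\BigO{x^{2k}}$ into $\BigO{yx^{k\beta - \theta}}$ under the hypothesis $y > x^k$ would give the stated asymptotic — but here I notice a discrepancy with the exponent claimed in Theorem \ref{theorem2}: the above yields $x^{k\beta}$, whereas the statement has $x^{k(\beta+1)/2}$. The correct normalization is that the summatory function should be taken over $d_i \le x$ with the relevant weight having abscissa reflecting the size of $c_q^{\beta}(n)$, or equivalently one should note $[d^{\beta}_1,\ldots,d^{\beta}_k] = [d_1,\ldots,d_k]^\beta$ forces a rescaling $q \mapsto q$ but $d$ ranging only up to $x$ with the actual Dirichlet series having abscissae $c_i = (\beta+1)/2$ rather than $\beta$; I would need to recompute $g_\beta$'s local average more carefully — the exponent in $g_\beta(p^{v_1},\ldots,p^{v_k})/p^{(v_1+\cdots+v_k)s}$ is governed by balancing $\beta$ against the $q_i$-summation range, and the honest abscissa of convergence of the full $q$-series is $(\beta+1)/2$, not $\beta$. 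Resolving this exponent bookkeeping — getting $c_i = (\beta+1)/2$ to come out of the Euler product correctly — is the one genuinely non-routine point, and is the main obstacle; everything else is a faithful transcription of Sections 3–6 with $p \rightsquigarrow p^{\beta}$.
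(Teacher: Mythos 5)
Your proposal does not close, and the point at which it breaks is not ``exponent bookkeeping'' that a more careful transcription would fix. The paper itself offers no argument here beyond the remark that the theorem follows by slightly modifying the proof of Theorem \ref{maintheorem}, so the only question is whether your modification produces the stated exponent --- and it does not, for a structural reason. Your reduction to $f_\beta=\mu\star\cdots\star\mu\star g_\beta$ with $g_\beta=(n_1\cdots n_k/[n_1,\dots,n_k])^{\beta}$ is correct, and the Euler product does yield the factors $\zeta(s_I-\beta(|I|-1))$ with poles at $s_I=\beta(|I|-1)+1$. But your assertion that the abscissae are $c_i=\beta$ is wrong: the minimal symmetric $c$ for which $\{\Re(s_i)>c\ \forall i\}$ lies in the convergence domain must satisfy $|I|\,c\ge\beta(|I|-1)+1$, i.e.\ $c\ge\beta+(1-\beta)/|I|$, for every $2\le|I|\le k$. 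At $\beta=1$ all of these constraints coincide at $c=1$, which is exactly why Theorem \ref{maintheorem} has all $2^k-k-1$ linear forms simultaneously critical and gets degree $2^k-2k-1$. For $\beta>1$ the binding constraint is $|I|=k$, giving $c=\beta-(\beta-1)/k$, total exponent $\langle\c,\mathbf 1\rangle=\beta(k-1)+1$, a single critical form $s_1+\cdots+s_k$, and a polynomial of degree $0$. For $\beta<1$ the binding constraint is $|I|=2$, giving $c=(\beta+1)/2$ and total exponent $k(\beta+1)/2$, but then only the $\binom{k}{2}$ forms $s_i+s_j$ are critical and the degree is $\binom{k}{2}-k$, not $2^k-2k-1$.

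Consequently the discrepancy you flagged at the end is real and cannot be repaired: for integer $\beta\ge2$ and $k\ge3$ one has $\beta(k-1)+1-k(\beta+1)/2=(\beta-1)(k-2)/2>0$, and since $f_\beta\ge0$ (your adaptation of Theorem \ref{non negativity theoerem} is fine), the diagonal prime tuples alone give $\sum_{q_1,\dots,q_k\le x}f_\beta\ \ge\ \sum_{p\le x}f_\beta(p,\dots,p)\ \gg\ x^{\beta(k-1)+1}/\log x$, which already exceeds the claimed main term $x^{k(\beta+1)/2}Q(\log x)$. So the statement as printed is not reachable by this method for $\beta\ne1$, and your closing paragraph --- which posits without justification that ``the honest abscissa is $(\beta+1)/2$'' --- is not an argument and is false for $\beta>1$. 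What your computation honestly yields is a different asymptotic: exponent $\beta(k-1)+1$ with no logarithms when $\beta>1$, and exponent $k(\beta+1)/2$ with $\deg Q=\binom{k}{2}-k$ when $\beta<1$; only at $\beta=1$ does one recover the exponent and the degree $2^k-2k-1$ claimed in the theorem. (A minor further slip: the error from replacing the divisor count by $y/[d_1,\dots,d_k]^{\beta}+\operatorname{O}(1)$ is $\operatorname{O}(x^{k(\beta+1)})$, not $\operatorname{O}(x^{2k})$, which also changes the admissible range of $y$.)
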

One can prove Theorem \ref{theorem2} by slightly modifying the proof of Theorem \ref{maintheorem}, taking into account the (minor) differences in the definitions between Ramanujan sums and the Cohen-Ramanujan sums.  

\section{\bf Concluding remarks}
There is undoubtedly a deeper significance of our main theorem to our current understanding of the Riemann hypothesis.  Indeed, Ramanujan (see formula (7.2) in \cite{ramanujan1918certain}) showed that 
$${\sigma_{1-s} (n) \over \zeta(s)} = \sum_{q=1}^\infty {c_q(n) \over q^s} ,\qquad \sigma_w(n):= \sum_{d|n} d^w , $$
valid for $\Re(s) >1$.  As the left hand side admits
a meromorphic continuation to the entire complex plane with poles located at the zeros of the zeta function, we can see from standard analytic number theory that the Riemann hypothesis is equivalent to
the estimate
$$\sum_{q\leq x} c_q(n) = O(x^{{1\over 2} + \epsilon}) ,$$
for any $\epsilon >0$ and for any fixed $n$.
The implied constant in the estimate depends on $n$
and a careful analysis gives $O((xn)^{{1\over 2} + \epsilon} )$ as the final estimate for the sum.
We therefore can expect $O(y^{1+ k/2 +\epsilon} x^{k/2+\epsilon} )$ as a final estimate for $S_k(x,y)$ assuming the Riemann hypothesis.   
What our result shows is that this is unconditionally true if 
$y$ is around $x$.  

$$\quad $$
\noindent Acknowledgements.  We thank Sneha Chaubey
for comments on an earlier draft of this paper.
 \bibliographystyle{plain}
   \bibliography{ref}
\end{document}